\documentclass[11pt]{amsart}

\usepackage{geometry}
\usepackage{amsmath,amssymb,amsthm,mathtools}
\usepackage{enumitem}
\usepackage[colorlinks=true,linkcolor=blue,citecolor=blue,urlcolor=blue]{hyperref}

\newtheorem{theorem}{Theorem}
\newtheorem{lemma}[theorem]{Lemma}
\newtheorem{proposition}[theorem]{Proposition}

\newtheorem{remark}[theorem]{Remark}
\newtheorem{question}[theorem]{Question}

\DeclareMathOperator{\Pic}{Pic}
\DeclareMathOperator{\NS}{NS}

\newcommand{\DD}{\mathcal D}
\newcommand{\Gm}{\mathbb G_m}

\title{A note on Azumaya algebras and one-forms}
\author{Siqing Zhang}
\date{}

\begin{document}

\begin{abstract}
The crystalline differential operators on a smooth variety $X$ give rise to a non-split Azumaya algebra over the cotangent bundle of the Frobenius twist $X'$.
In some cases, this Azumaya algebra splits when restricted to finite covers of $X'$.
In this short note, we show that, whenever $X$ has a non-closed global one-form, there is a degree one cover of $X'$ on which the Azumaya algebra does not split, answering a question of Sasha Petrov.
\end{abstract}

\maketitle

\section{The Question}

Throughout \(k\) is an algebraically closed field of characteristic \(p>0\).
Let \(X\) be a smooth proper variety over \(k\), and let \(X'\) be its
Frobenius twist.  By Bezrukavnikov--Mirkovic--Rumynin \cite[Theorem
2.2.3]{BMR}, the sheaf of crystalline differential operators on \(X\) defines
an Azumaya algebra \(\mathcal A_X\) on the cotangent bundle  \(T^*X'\).

In many useful cases, especially in the characteristic-\(p\) geometric
Langlands and non-abelian Hodge Theory for curves
\cite{BB,Groechenig,deCataldoZhang}, and for abelian varieties
\cite{Hao}, this Azumaya algebra splits when
restricted to spectral varieties; compare also the \(p\)-adic analogue
\cite{Heuer}.
Sasha Petrov asked the following question:

\begin{question}\cite[Question 7.3(1)]{Petrov}\label{petrov-question}
Does there exist a smooth proper variety \(X\) together with a smooth
subvariety \(Z\subset T^*X'\), finite over \(X'\), such that
\(\mathcal A_X|_Z\) does not split?
\end{question}

In this note, we answer this question affirmatively in every positive characteristic.

\section{The Answer}

 Let
\(
X' := X\times_{k,F_k} k
\)
be the Frobenius twist.
Write \(F_X=F_{X/k}\colon X\to X'\) for the relative
Frobenius, and \(\pi_X\colon X'\to X\) for the projection.
Milne's exact sequence, in the form used by Ogus--Vologodsky
\cite[(4.1.1)]{OV}, is the following exact sequence of \'{e}tale sheaves of
abelian groups on \(X'\):
\begin{equation}\label{eq:milne}
0\longrightarrow \mathcal O_{X'}^*
\longrightarrow F_{X,*}\mathcal O_X^*
\xrightarrow{d\log}
F_{X,*}Z^1_{X/k}
\xrightarrow{\pi_X^*-C_X}
\Omega^1_{X'/k}
\longrightarrow 0.
\end{equation}
Here \(Z^1_{X/k}\subset \Omega^1_{X/k}\) is the sheaf of closed one-forms and
\(C_X\) is the Cartier operator.  Splice the exact sequence and the connecting morphisms define the obstruction homomorphism
\begin{equation}\label{eq:obstruction}
\operatorname{ob}_{X'}\colon
H^0(X',\Omega^1_{X'/k})
\to
H^2_{\mathrm{\acute et}}(X',\mathcal O_{X'}^*).
\end{equation}

Let $\DD_{X/k}$ be the sheaf of crystalline differential operators on $X$.
 The center
of \(F_{X,*}\DD_{X/k}\) is identified with
\(\operatorname{Sym}_{\mathcal O_{X'}}T_{X'/k}\), and 
\(F_{X,*}\DD_{X/k}\) is identified with an Azumaya algebra on \(T^*X'\).  This is the algebra
denoted \(\mathcal A_X\) above.

\begin{lemma}\label{lem:graph}
Let \(X/k\) be smooth, and let
\(\omega\in H^0(X',\Omega^1_{X'/k})\).  Let
\(
i_\omega\colon X'\longrightarrow T^*X'
\)
be the section corresponding to \(\omega\), with graph
\(\Gamma_\omega\subset T^*X'\).  Then
\[
[i_\omega^*\mathcal A_X]
=\operatorname{ob}_{X'}(\omega)
\quad\text{in } H^2_{\mathrm{\acute et}}(X',\mathcal O_{X'}^*).
\]
In particular, if \(\operatorname{ob}_{X'}(\omega)\ne 0\), then
\(\mathcal A_X|_{\Gamma_\omega}\) does not split.
\end{lemma}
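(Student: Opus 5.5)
The plan is to describe $i_\omega^*\mathcal A_X$ concretely as a sheaf of algebras on $X'$ and then compare its class with the connecting maps of \eqref{eq:milne}. Pulling back along $i_\omega$ sets each central element $\xi\in T_{X'/k}\subset \operatorname{Sym}T_{X'/k}$ equal to $\langle\omega,\xi\rangle$. Under the $p$-curvature description of the center, where $\xi'$ corresponds to $\partial^p-\partial^{[p]}$ with $\partial$ the vector field on $X$ lifting $\xi'$, the result is
\[
i_\omega^*\mathcal A_X \;=\; F_{X,*}\DD_{X/k}\otimes_{\operatorname{Sym}T_{X'}}\mathcal O_{X'},
\]
an Azumaya algebra of rank $p^{2\dim X}$ on $X'$. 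A splitting module for it, over an étale open $U'\subset X'$, is the same thing as a locally free $\mathcal O_U$-module of rank $1$ with an integrable connection whose $p$-curvature is $\omega$, i.e.\ $\pi_X^*$-twisted appropriately, i.e.\ $\psi(\partial)=\langle F^*\omega,\partial\rangle$. Indeed, such a module $M$ makes $F_*M$ a rank-$p^{\dim X}$ module over $i_\omega^*\mathcal A_X$, which is the correct rank for a splitting, and conversely any splitting module arises this way. So the key steps are as follows.

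First I will identify the $\mathcal O_{X'}^*$-gerbe of splittings of $i_\omega^*\mathcal A_X$ with the gerbe $\mathcal G_\omega$ of line bundles with integrable connection on $X$ whose $p$-curvature is $\omega$. Its automorphism sheaf is the horizontal units, $(F_*\mathcal O_X^*)^{d=0}=\mathcal O_{X'}^*$, so the class of the Azumaya algebra equals the class of this gerbe in $H^2(X',\mathcal O_{X'}^*)$. Here I am using the standard fact that the Brauer class is the class of the gerbe of trivializations.

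Second, I will compute the class of $\mathcal G_\omega$ using \eqref{eq:milne}.

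Locally, a rank-one connection is $d+\eta$ with $\eta\in F_*Z^1$, and its $p$-curvature is $\pi_X^*\eta-C_X(\eta)$; this is Katz's formula $\psi=F^*(\eta-C(\eta))$, up to the sign and twist conventions of \cite{OV}. Isomorphisms between two such connections are units $f$, acting by $\eta\mapsto\eta+d\log f$.

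Splice \eqref{eq:milne} into the short exact sequences
\[
0\to\mathcal O_{X'}^*\to F_*\mathcal O_X^*\to B\to0,\qquad 0\to B\to F_*Z^1\to\Omega^1_{X'}\to0 .
\]
Under this splicing, $\mathcal G_\omega$ is exactly the gerbe that realizes the composite of the two coboundaries:
\begin{itemize}
\item the torsor of local lifts of $\omega$ to $F_*Z^1$, which gives $\delta_2(\omega)\in H^1(B)$;
\item followed by the gerbe of lifting that $B$-torsor to $F_*\mathcal O_X^*$, which gives $\delta_1$.
\end{itemize}
This is a Čech computation: choose an étale cover, local forms $\eta_i$, differences $\eta_i-\eta_j=d\log f_{ij}$, and read off the cocycle $f_{ij}f_{jk}f_{ki}\in\mathcal O_{X'}^*$. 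Hence $[\mathcal G_\omega]=\operatorname{ob}_{X'}(\omega)$.

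The main obstacle is the first step: checking carefully that splittings of the central reduction correspond precisely to rank-one connections with prescribed $p$-curvature, rather than to higher-rank objects. I would handle this étale-locally by the rank count above, using the local splitting of $\mathcal A_X$ in \cite{BMR}, which is provided by $\mathcal O_X$ twisted by a local form $\eta$. Matching signs and conventions between $\pi_X^*-C_X$ and the $p$-curvature is routine but needs care, and a sign discrepancy would only change the class by $\pm1$, which does not affect nonvanishing.

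The final assertion then follows: $\Gamma_\omega\cong X'$ via $i_\omega$, so $\mathcal A_X|_{\Gamma_\omega}$ splits if and only if $[i_\omega^*\mathcal A_X]=0$.
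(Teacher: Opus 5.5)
Your proposal is correct and follows the same route as the paper. Both identify the gerbe of splittings of $i_\omega^*\mathcal A_X$ with the $\mathcal O_{X'}^*$-gerbe of line bundles on $X$ with integrable connection of $p$-curvature $\omega$, and then identify the class of that gerbe with $\operatorname{ob}_{X'}(\omega)$ via the spliced sequence \eqref{eq:milne}; the difference is that the paper cites Ogus--Vologodsky (Proposition 4.2 and Remark 4.3) for these two identifications, whereas you sketch them directly (the rank count for splitting modules and the \v{C}ech computation of the composite coboundary).
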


\begin{proof}
Ogus--Vologodsky identify \(\operatorname{ob}_{X'}(\omega)\) with the class of
the \(\Gm\)-gerbe of line bundles on \(X\) with integrable connection and
\(p\)-curvature \(\omega\) \cite[Proposition 4.2]{OV}.  The same gerbe is the gerbe of splittings of the restriction of
\(\mathcal A_X\) to the section \(i_\omega\) by \cite[Remark 4.3]{OV}.  
\end{proof}

The following proposition reduces Question \ref{petrov-question} to finding a variety with non-closed global 1-forms.

\begin{proposition}\label{prop:dimension}
Let \(X/k\) be a smooth proper connected variety.  If not every global
one-form on \(X\) is closed, i.e.
\(
H^0(X,Z^1_{X/k})\subsetneq H^0(X,\Omega^1_{X/k}),
\)
then the obstruction map
\(
\operatorname{ob}_{X'}\) as in \eqref{eq:obstruction}
is nonzero.
\end{proposition}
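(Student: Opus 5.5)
The plan is to splice \eqref{eq:milne} into two short exact sequences and compare sizes of groups. Let \(\mathcal B:=d\log(F_{X,*}\mathcal O_X^*)\subset F_{X,*}Z^1_{X/k}\). Then we have
\[
0\to \mathcal O_{X'}^*\to F_{X,*}\mathcal O_X^*\to \mathcal B\to 0,
\qquad
0\to \mathcal B\to F_{X,*}Z^1_{X/k}\xrightarrow{\pi_X^*-C_X}\Omega^1_{X'/k}\to 0,
\]
and \(\operatorname{ob}_{X'}=\delta_2\circ\delta_1\). Here \(\delta_1\colon H^0(X',\Omega^1_{X'/k})\to H^1(X',\mathcal B)\) and \(\delta_2\colon H^1(X',\mathcal B)\to H^2(X',\mathcal O_{X'}^*)\) are the connecting maps. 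Since \(F_X\) is finite, \(H^1(X',F_{X,*}\mathcal O_X^*)=\Pic(X)\). The long exact sequences then give two facts. First, \(\ker\delta_1=(\pi_X^*-C_X)\bigl(H^0(X,Z^1_{X/k})\bigr)\). Second, \(\ker\delta_2=d\log(\Pic X)\subset H^1(X',\mathcal B)\). Suppose, for contradiction, that \(\operatorname{ob}_{X'}=0\). Then \(\delta_1\) induces an injection of groups
\[
H^0(X',\Omega^1_{X'/k})\big/(\pi_X^*-C_X)H^0(X,Z^1_{X/k})\hookrightarrow d\log(\Pic X).
\]

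Step two is to show that the target is finite. The map \(d\log\colon \Pic(X)\to H^1(X',\mathcal B)\) kills \(p\Pic(X)\), because \(d\log(f^p)=0\). So its image is a quotient of \(\Pic(X)/p\). Since \(X\) is proper, \(\Pic(X)\) is an extension of the finitely generated Néron--Severi group \(\NS(X)\) by \(\Pic^0(X)=\Pic^0_{X,\mathrm{red}}(k)\). The latter is the group of \(k\)-points of an abelian variety over an algebraically closed field, hence divisible. Therefore \(\Pic(X)/p\) is a quotient of \(\NS(X)/p\), which is finite, and so \(d\log(\Pic X)\) is finite.

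Step three is to show that the source is infinite. Set \(n=\dim_k H^0(X,\Omega^1_{X/k})=\dim_k H^0(X',\Omega^1_{X'/k})\), using flat base change along \(F_k\). Set \(z=\dim_k H^0(X,Z^1_{X/k})\); by hypothesis \(z<n\). After choosing bases, the map \(\pi_X^*-C_X\colon H^0(X,Z^1)\to H^0(X',\Omega^1_{X'})\) is a morphism of varieties \(\mathbb A^z_k\to\mathbb A^n_k\). Indeed, \(C_X\) is \(k\)-linear on the relevant spaces, and \(\pi_X^*\) differs from a linear isomorphism by a Frobenius twist of coordinates, which is algebraic. The image is therefore a constructible subgroup of dimension at most \(z<n\), hence contained in a closed subgroup \(H\) of dimension less than \(n\). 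The quotient \(\mathbb A^n/H\) is then a positive-dimensional commutative algebraic group over the infinite field \(k\), so it has infinitely many \(k\)-points. Equivalently, \(k^n\) is not a finite union of translates of \(H(k)\). This contradicts step two, so \(\operatorname{ob}_{X'}\neq 0\).

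The main obstacle is making step three rigorous. Specifically, one must check that \(\pi_X^*-C_X\) really is a morphism of algebraic groups on global sections, keeping track of which side carries the \(p\)-semilinearity. One must also check that the closure of its image has dimension at most \(z\). My first attempt would be to write \(\pi_X^*\) as \(\sigma\circ L\), with \(L\) linear and \(\sigma\) coordinatewise Frobenius, and to note that \(\sigma\) is an algebraic endomorphism of \(\mathbb G_a^n\). A secondary point is the identification \(\ker\delta_2=d\log(\Pic X)\). This needs \(H^1_{\mathrm{\acute et}}(X',F_{X,*}\mathcal O_X^*)=\Pic X\), which holds because \(F_X\) is finite, so \(F_{X,*}\) is exact on étale sheaves.
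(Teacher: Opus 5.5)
Your proof is correct, and it takes a genuinely different route from the paper.

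\textbf{The paper's route.} The paper works with the group scheme $G=\Pic^{\nabla,\mathrm{rig}}_{X/k}$ of rank-one connections and the $p$-curvature morphism $\psi\colon G\to V$. It invokes Ogus--Vologodsky's Proposition 4.2 to rewrite $\ker(\operatorname{ob}_{X'})$ as $\psi(G(k))$. It then bounds $\dim\psi(G^0)\le h^0(X,Z^1_{X/k})$, using two inputs: the exact sequence of their Proposition 4.11, and Cartier descent (which gives $\ker\psi=\Pic_{X'/k}$) to cancel the $\Pic^0$-directions. Finally it handles the components of $G$ via finite generation of $\NS(X)$ and $p$-torsion of $V$.

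\textbf{Your route.} You never leave the spliced Milne sequence. Your subgroup $\ker\delta_1=(\pi_X^*-C_X)H^0(X,Z^1_{X/k})$ is, up to sign conventions, the set of $p$-curvatures of the connections $d+\eta$ on the trivial bundle. You dispose of all nontrivial line bundles at once. The map $\Pic(X)\to H^1(X',\mathcal B)$ lands in an $\mathbb F_p$-vector space. Moreover $p$-divisibility of $\Pic^0(X)$ gives $\Pic(X)/p\cong\NS(X)/p$, which is finite. So $\ker(\operatorname{ob}_{X'})/\ker\delta_1$ is finite. This $p$-divisibility step is what replaces the paper's dimension count via $\ker\psi$.

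\textbf{Comparison.} Your route buys self-containedness. You need no representability of $\Pic^{\nabla}$, no Ogus--Vologodsky 4.2 or 4.11, no Katz, and no faithful-flatness and dimension theory for group schemes. What remains is:
\begin{enumerate}[label=(\roman*)]
\item Milne's sequence;
\item $H^1_{\mathrm{\acute et}}(X',F_{X,*}\mathcal O_X^*)=\Pic(X)$;
\item the theorem of the base;
\item the elementary fact that an additive map $\mathbb G_a^z\to\mathbb G_a^n$ with $z<n$ has image of infinite index on $k$-points.
\end{enumerate}
The paper's route buys the moduli interpretation: the unobstructed $\omega$ are exactly the $p$-curvatures of rank-one connections. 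This is the form that dovetails with Lemma~\ref{lem:graph} (splittings of $\mathcal A_X$ along $\Gamma_\omega$ as line bundles with connection). Both arguments in fact prove the same sharper statement: $\ker(\operatorname{ob}_{X'})$ is a finite union of cosets of a subgroup of dimension at most $h^0(X,Z^1_{X/k})$.

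\textbf{One small precision.} Divisibility of $\Pic^0(X)$ needs $\Pic^0_{X/k,\mathrm{red}}$ to be an abelian variety. That uses smoothness (or normality) of $X$, not just properness as you wrote. For a merely proper $X$ a $\mathbb G_a$-part can appear, and its $k$-points are not $p$-divisible. Here $X$ is smooth, so this is harmless.
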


\begin{proof}
Let \(G:=\Pic^{\nabla,\mathrm{rig}}_{X/k}\)
be the rigidified Picard scheme of line bundles with integrable
connection.  Let \(V:=H^0(X',\Omega^1_{X'/k})\), viewed as a vector group.
By \cite[Proposition 4.11 and the paragraph following it]{OV}, taking \(p\)-curvature defines a morphism of group schemes
\[
\psi\colon G\longrightarrow V.
\]
By \cite[Proposition 4.2]{OV}, \(\operatorname{ob}_{X'}(\omega)=0\) if and only if \(\omega\in\psi(G(k))\).  Thus it is enough to show that
\(\psi(G(k))\ne V(k)\).

By Cartier descent,
\[
\Pic_{X'/k}\longrightarrow G,\qquad
M\longmapsto (F_X^*M,\nabla_{\mathrm{can}})
\]
identifies \(\Pic_{X'/k}\) with \(\ker(\psi)\); see \cite[Theorem 5.1]{Katz}.
Forgetting the connection fits into the exact sequence of fppf sheaves
\begin{equation*}
0\to H^0(X,Z^1_{X/k})\to G\xrightarrow{b}\Pic_{X/k}
\to H^1(X,Z^1_{X/k}),
\end{equation*}
as in \cite[Proposition 4.11]{OV}.  In particular, if \(G^0\) is the neutral
component of \(G\), then
\[
\dim G^0\leq h^0(X,Z^1_{X/k})+\dim \Pic^0_{X/k}.
\]
Let \(\operatorname{Im}(\psi|_{G^0})\) be the algebraic subgroup image.
The quotient morphism \(G^0\to \operatorname{Im}(\psi|_{G^0})\) is 
faithfully flat. Therefore
\[
\dim \operatorname{Im}(\psi|_{G^0})=\dim G^0-\dim\ker(\psi|_{G^0})
\leq h^0(X,Z^1_{X/k})<\dim V.
\]

Finally, passing from \(G^0\) to \(G\) only gives finitely many translates:
Let \(G_1=b^{-1}(\Pic^0_{X/k})\).  Since \(G_1\) is of finite type,
\(G_1/G^0\) is finite.  Moreover \(G(k)/G_1(k)\) injects into \(\NS(X)\), so
its image under \(\psi\) is finite: \(\NS(X)\) is finitely generated, whereas
\(V(k)\) is killed by \(p\).

\end{proof}

\begin{theorem}
    Let $X/k$ be a smooth proper connected variety.
    If not every global one-form on $X$ is closed, then there is a smooth subvariety $Z\subset T^*X'$, finite over $X'$, such that $\mathcal{A}_X|_Z$ does not split. Moreover, the finite morphism $Z\to X'$ is an isomorphism.
\end{theorem}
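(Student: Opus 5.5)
The plan is to combine Proposition \ref{prop:dimension} with Lemma \ref{lem:graph}, taking \(Z\) to be the graph of a suitably chosen one-form on \(X'\).

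\textbf{Step 1: choose the form.} By hypothesis \(H^0(X,Z^1_{X/k})\subsetneq H^0(X,\Omega^1_{X/k})\). Proposition \ref{prop:dimension} then says that \(\operatorname{ob}_{X'}\) is nonzero. So we may pick \(\omega\in H^0(X',\Omega^1_{X'/k})\) with \(\operatorname{ob}_{X'}(\omega)\neq 0\).

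\textbf{Step 2: define \(Z\).} Set \(Z:=\Gamma_\omega=i_\omega(X')\subset T^*X'\), where \(i_\omega\colon X'\to T^*X'\) is the section determined by \(\omega\). The projection \(p\colon T^*X'\to X'\) satisfies \(p\circ i_\omega=\mathrm{id}_{X'}\). Since \(p\) is separated (indeed affine), the section \(i_\omega\) is a closed immersion. Hence \(Z\) is a closed subscheme, and \(p|_Z\colon Z\to X'\) is an isomorphism with inverse \(i_\omega\). In particular \(Z\to X'\) is finite of degree one.

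Smoothness, properness and connectedness of \(X'\) follow from those of \(X\) by base change along the isomorphism \(F_k\colon k\to k\). Since \(Z\cong X'\), the same holds for \(Z\), so \(Z\) is a smooth subvariety.

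\textbf{Step 3: non-splitting.} Under the identification \(Z\cong X'\), the restriction \(\mathcal A_X|_Z\) corresponds to \(i_\omega^*\mathcal A_X\). Lemma \ref{lem:graph} gives \([i_\omega^*\mathcal A_X]=\operatorname{ob}_{X'}(\omega)\neq 0\) in \(H^2_{\mathrm{\acute et}}(X',\mathcal O_{X'}^*)\). A split Azumaya algebra has trivial class, so \(\mathcal A_X|_Z\) does not split.

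All the real work is already contained in Proposition \ref{prop:dimension}, which supplies \(\omega\), and Lemma \ref{lem:graph}, which computes the class. The only point that needs any care is the routine check in Step 2 that the graph of a section is a closed smooth subvariety mapping isomorphically to \(X'\), and I expect no genuine obstacle there.
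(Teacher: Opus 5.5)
Your proposal is correct and is essentially the paper's argument: choose \(\omega\) with \(\operatorname{ob}_{X'}(\omega)\neq 0\) via Proposition~\ref{prop:dimension}, take \(Z=\Gamma_\omega\), and conclude non-splitting from Lemma~\ref{lem:graph}. Your Step~2 also supplies the routine checks the paper leaves implicit: the section is a closed immersion, and \(Z\cong X'\) is smooth and connected.
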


\begin{proof}
    Combine Lemma \ref{lem:graph} and Proposition \ref{prop:dimension}, and take $Z$ to be $\Gamma_{\omega}$ for some $\omega$ such that $\operatorname{ob}_{X'}(\omega)\neq 0$.
\end{proof}

\subsection{The Examples}\;

\subsubsection{Surface examples}\;

The existence of smooth projective surfaces with non-closed global
one-forms goes back at least to Mumford \cite[Corollary on p.341]{MumfordPathologies}.
The author would like to thank Sasha Petrov for pointing out this example.

In \cite[Corollary 3.4 and the paragraph following it]{Takeda}, Takeda constructs some generalized Raynaud surfaces that also have non-closed global one-forms.
\cite{Takeda} is written under the assumption that $p>2$, but one can check that these examples still hold when $p=2$, see e.g. \cite[\S5]{Tziolas} for more details.

\subsubsection{Higher-dimensional examples}\;

For higher-dimensional examples, we can take $X$ to be any of the surfaces above and take 
$X\times\mathbf P^n_k$.
Indeed, if $\eta\in H^0(X,\Omega_{X/k}^1)$ is non-closed, then so is the pullback of $\eta$ to $X\times\mathbf P^n_k$.

\begin{remark}
    If the Hodge-to-de Rham spectral sequence degenerates at $E_1$, then we must have that $H^0(X,Z^1_{X/k})= H^0(X,\Omega^1_{X/k})$.
    Therefore, by Deligne-Illusie \cite[Theorem 2.1 and Corollary 2.5]{DI}, all examples above do not lift to the ring of second Witt vectors $W_2(k)$.
\end{remark}

\;\\

\subsection*{Acknowledgements}\;

The author thanks Sasha Petrov for very helpful feedback and for pointing out Mumford's example.
This work is supported by an AMS-Simons travel grant.

\;\\

\;\\

\footnotesize{
 \textsc{Department of Mathematics, Yale University,  New Haven, CT, 06511,
USA}\par\nopagebreak
  \textit{E-mail address}: \texttt{siqing.zhang.math@gmail.com}}

\end{document}